 \def\draw #1 by #2 (#3){
  \vbox to #2{
    \hrule width #1 height 0pt depth 0pt
    \vfill
    \special{picture #3} 
    }
  }
 \def\scaleddraw #1 by #2 (#3 scaled #4){{
  \dimen0=#1 \dimen1=#2
  \divide\dimen0 by 1000 \multiply\dimen0 by #4
  \divide\dimen1 by 1000 \multiply\dimen1 by #4
  \draw \dimen0 by \dimen1 (#3 scaled #4)}
  }
\newtheorem{theorem}{Theorem}[section]
\newtheorem{example}{Example}
\newtheorem{problem}[example]{Problem}
\newtheorem{defin}[theorem]{Definition}
\newtheorem{lemma}[theorem]{Lemma}
\newtheorem{corollary}[theorem]{Corollary}
\newtheorem{nt}{Note}
 \newcommand{\singlespacing}{\let\CS=\@currsize\renewcommand{\baselinestretch}{1}\tiny\CS}
 \newcommand{\oneandahalfspacing}{\let\CS=\@currsize\renewcommand{\baselinestretch}{1.25}\tiny\CS}
 \newcommand{\doublespacing}{\let\CS=\@currsize\renewcommand{\baselinestretch}{1.35}\tiny\CS}
 \newtheorem{conjecture}[theorem]{Conjecture}
 \newtheorem{rule-def}[theorem]{Rule}
\begin{document}
\baselineskip 16pt
 \newcommand{\la}{\lambda}
 \newcommand{\si}{\sigma}
 \newcommand{\ol}{1-\lambda}
 \newcommand{\be}{\begin{equation}}
 \newcommand{\ee}{\end{equation}}
 \newcommand{\bea}{\begin{eqnarray}}
 \newcommand{\eea}{\end{eqnarray}}

 \baselineskip=0.30in

 \begin{center}

{\Large \bf On Zagreb indices of graphs}  \\

 \vspace{4mm}

{\large \bf Batmend Horoldagva$^a$} and {\large \bf Kinkar Chandra Das$^{b}$}

 \vspace{7mm}

 \baselineskip=0.20in

 $^a${\it Department of Mathematics, Mongolian National University of Education, \\
 Baga toiruu-14, Ulaanbaatar, Mongolia\/} \\
  {\rm e-mail:} {\tt horoldagva@msue.edu.mn}\\[2mm]

 $^b${\it Department of Mathematics, Sungkyunkwan University, \\
 Suwon 16419, Republic of Korea\/} \\
 {\rm e-mail:} {\tt kinkardas2003@googlemail.com}

 \vspace{4mm}

 \end{center}

 \vspace{5mm}

 \baselineskip=0.20in

 \begin{abstract}

 Let ${\mathcal G}_n$ be the set of class of graphs of order $n$. The first Zagreb index $M_1(G)$ is equal to the sum of squares of the
degrees of the vertices, and the second Zagreb index $M_2(G)$ is equal to the sum
of the products of the degrees of pairs of adjacent vertices of the underlying
molecular graph $G$. The three set of graphs are as follows:
 \begin{eqnarray*}
 &&A=\left\{G\in {\mathcal G}_n:\,\frac{M_1(G)}{n}>\frac{M_2(G)}{m}\right\},~B=\left\{G\in {\mathcal G}_n:\,\frac{M_1(G)}{n}=\frac{M_2(G)}{m}\right\}\\
 \mbox{ and }&&\\
 &&~~~~~~~~~~~~~~~~~~~~~~~~~C=\left\{G\in {\mathcal G}_n:\,\frac{M_1(G)}{n}<\frac{M_2(G)}{m}\right\}.
 \end{eqnarray*}
 In this paper we prove that $|A|+|B|<|C|$. Finally, we give a conjecture $|A|<|B|$.

 \bigskip

 \noindent
 {\bf AMS Classification:} 05C07, 05C35, 05C90\\
 \noindent
 {\bf Keywords:} Graph, First Zagreb index, Second Zagreb index

 \end{abstract}

 \baselineskip=0.30in

 \section{Introduction}

Let $G=(V,E)$ be a simple graph with vertex set $V(G)=\{v_1,\,v_2,\ldots,\,v_n\}$ and edge set $E(G)$\,, where $|V(G)|=n$ and $|E(G)|=m$. Let $\overline{G}$ be the complement of
$G$. We denote by $d_i=d_G(v_i)$ the degree of vertex $v_i$ for $i=1,\,2,\ldots,\,n$. Let ${\mathcal G}_n$ be the set of class of graphs of order $n$. For $S\subseteq {\mathcal G}_n$, let $|S|$ be the number of graphs in the set $S$. For any two nonadjacent vertices $v_i$ and $v_j$ in graph $G$, we use $G+v_iv_j$ to denote the graph obtained from adding a new edge $v_iv_j$ to graph $G$. Similarly, for $v_iv_j\in E(G)$, we use $G-v_iv_j$ to denote the graph obtained from deleting an edge $v_iv_j$ to graph $G$. The first Zagreb index $M_1(G)$ and the second Zagreb index $M_2(G)$ is defined as follows:
$$M_1(G)=\sum_{v_i\in V} d_i^2 \quad \text{and} \quad M_2(G)=\sum_{v_iv_j\in E(G)}d_i\,d_j.$$
The Zagreb indices $M_1$ and $M_2$ were first introduced by Gutman and Trinajstić in 1972, the
quantities of the Zagreb indices were found to occur within certain approximate expressions
for the total $\pi$-electron energy \cite{GT}. For more details of the mathematical
theory and chemical applications of the Zagreb indices, see \cite{BES, D, DG, GD, GRTW, HBDL,NKMT, PPS, SH, XDB, WY}.

\vspace*{3mm}

Let us consider the three sets $A$, $B$ and $C$ be as follows:
\begin{eqnarray*}
	&&A=\left\{G\in {\mathcal G}_n:\,\frac{M_1(G)}{n}>\frac{M_2(G)}{m}\right\},~B=\left\{G\in {\mathcal G}_n:\,\frac{M_1(G)}{n}=\frac{M_2(G)}{m}\right\}\\
\mbox{ and }\\
	&&~~~~~~~~~~~~~~~~~~~~~~~~~C=\left\{G\in {\mathcal G}_n:\,\frac{M_1(G)}{n}<\frac{M_2(G)}{m}\right\}.
\end{eqnarray*}
Thus we have $|A|+|B|+|C|=|{\mathcal G}_n|$ as $A\cap B=\emptyset$, $B\cap C=\emptyset$ and $C\cap A=\emptyset$.

\vspace*{3mm}

 Caporossi and Hansen \cite{CH} conjectured that
 $A=\emptyset$.   Although this conjecture is disproved for
 general graphs \cite{HV}, it was the beginning of a long series of studies to characterize the graphs $G$ for which  $G\in A$ or $G\in B$ or $G\in C$, see \cite{CHV, D1, DGH, FGE, HD2, HD4, HDS, HL, SHD, SM, SCh, VG, VSS} and
 the references cited therein. For a more detailed discussion of the comparison between the classical Zagreb indices we refer to the monograph \cite{Hd}.

 \vspace{3mm}

 \noindent
 In this paper, we prove that $|A|+|B|<|C|$.  Finally, we give a conjecture $|A|<|B|$.

\section{Main Result}

\noindent
In this section we compare three classes of graphs. For this we need the following results.

\begin{lemma} \label{1r1} Let $G$ be a graph of order $n>1$ and size $m$.\\
	\rm{(i)} If $G\in A$, then $\overline{G}\in C$.\\
	\rm{(ii)} If $G$ is irregular and $G\in B$, then $\overline{G}\in C$.
\end{lemma}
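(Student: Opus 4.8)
The statement compares $M_1/n$ and $M_2/m$ for a graph and its complement, so the first order of business is to find clean expressions for $M_1(\overline G)$ and $M_2(\overline G)$ in terms of quantities associated with $G$. Writing $\overline m = \binom{n}{2} - m$ for the size of $\overline G$ and using $d_{\overline G}(v_i) = n-1-d_i$, one gets immediately
\[
M_1(\overline G) = \sum_{i=1}^n (n-1-d_i)^2 = n(n-1)^2 - 2(n-1)\cdot 2m + M_1(G),
\]
since $\sum d_i = 2m$. For $M_2(\overline G)$ the standard identity is
\[
M_2(\overline G) = \binom{n-1}{2}\,n(n-1) \;-\; \text{(correction terms)},
\]
but it is cleaner to use the known formula
\[
M_2(\overline G) = \tfrac{1}{2}(n-1)^2\!\left[(n-1)n - 2m\right] \;-\; (n-1)\!\left[(n-1)\cdot 2m - M_1(G)\right] \;+\; \tfrac12\!\left(4m^2 - M_1(G)\right) - M_2(G) \cdot(\text{hmm}),
\]
so the real first step is simply to derive carefully, from $M_2(\overline G) = \sum_{v_iv_j \in E(\overline G)} (n-1-d_i)(n-1-d_j)$, an exact formula of the shape $M_2(\overline G) = f(n,m,M_1(G)) - M_2(G)$. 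This is the routine-but-essential computation; everything else hinges on it.

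**Main argument.** Once those two formulas are in hand, consider the quantity $\Delta(G) := m\,M_1(G) - n\,M_2(G)$, so that $G \in A \iff \Delta(G) > 0$, $G \in B \iff \Delta(G)=0$, $G\in C \iff \Delta(G) < 0$. Substituting the complement formulas, I expect $\Delta(\overline G) = \overline m\, M_1(\overline G) - n\, M_2(\overline G)$ to simplify to an expression of the form
\[
\Delta(\overline G) \;=\; \Delta(G) \;+\; (\text{a term that is provably} \ge 0, \text{ with equality iff } G \text{ is regular}),
\]
possibly after also using the elementary bound $M_1(G) \ge 4m^2/n$ (Cauchy–Schwarz / the fact that $\sum d_i = 2m$), which is strict unless $G$ is regular. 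For part (i): if $G \in A$ then $\Delta(G) > 0$; I want to conclude $-\Delta(\overline G) = \overline m\,M_1(\overline G) - n\,M_2(\overline G) < 0$ is impossible, i.e. that $\Delta(\overline G) < 0$... wait — more carefully, $\overline G \in C$ means $n\,M_2(\overline G) > \overline m\, M_1(\overline G)$, i.e. $\Delta(\overline G) < 0$. So I need $\Delta(G) > 0 \Rightarrow \Delta(\overline G) < 0$, which means the "extra term" above must in fact be $\le -\Delta(G) < 0$ — so the decomposition is more likely $\Delta(\overline G) = -\alpha\,\Delta(G) - (\text{nonnegative term})$ for some positive constant $\alpha$, or something in that spirit. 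The precise form will fall out of the algebra; the point is that after the substitution the sign of $\Delta(\overline G)$ should be controlled by $-\Delta(G)$ plus a term with a definite sign coming from irregularity.

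**Part (ii) and the obstacle.** For part (ii), $G \in B$ gives $\Delta(G) = 0$, so $\Delta(\overline G)$ reduces to just the "extra term"; irregularity of $G$ should make that term strictly negative (this is exactly where the hypothesis "$G$ irregular" is used — if $G$ were regular both $G$ and $\overline G$ would lie in $B$), giving $\overline G \in C$. I expect the main obstacle to be purely the bookkeeping in the first step: getting $M_2(\overline G)$ exactly right, since it involves $\sum_{v_iv_j\in E(G)}(d_i+d_j) = M_1(G)$ and $\sum_{\text{all pairs } i<j} d_id_j = \tfrac12(4m^2 - M_1(G))$, and a sign error there would propagate through everything. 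After that, verifying that the residual term has the claimed sign and the claimed equality case (regular graphs) should be a short nonnegativity argument, plausibly just $M_1(G) \ge 4m^2/n$ rearranged, or a sum-of-squares $\sum_i (d_i - 2m/n)^2 \ge 0$. I would organize the write-up as: (1) the two complement identities; (2) the resulting formula for $\Delta(\overline G)$; (3) the sign analysis splitting into the cases $\Delta(G) > 0$ and $\Delta(G) = 0$ with $G$ irregular.
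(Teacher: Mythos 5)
Your plan is essentially the paper's proof: the authors use the same two complement identities, compute $nM_2(\overline G)-\bigl(\tfrac{n(n-1)}{2}-m\bigr)M_1(\overline G) = (n-2)\bigl(\tfrac{n}{2}M_1(G)-2m^2\bigr) + \bigl(mM_1(G)-nM_2(G)\bigr)$, and then apply $M_1(G)\ge 4m^2/n$ (equality iff $G$ is regular) exactly as you anticipate, so the decomposition $\Delta(\overline G) = -\Delta(G) - (\text{nonnegative term, zero iff regular})$ that you conjectured is precisely what the algebra yields. The only piece missing from your write-up is the routine derivation of the exact $M_2(\overline G)$ formula, which you correctly identify as the bookkeeping step and which the paper simply quotes from the literature.
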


\begin{proof} From the results in \cite{DG,KKN}, we have
	\begin{equation}
	M_2(\overline{G})=\frac{n(n-1)^3}{2}-3m(n-1)^2+2m^2+\left(n-\frac{3}{2}\right)M_1(G)-M_2(G)\label{eq1}
	\end{equation}
	and
	\begin{equation}
	M_1(\overline{G})=n(n-1)^2-4m(n-1)+M_1(G).\label{eq2}
	\end{equation}
	On the other hand,  it is well known that
	\begin{equation}
	M_1(G)\geq\frac{4m^2}{n}\label{eq4}
	\end{equation}
	with equality if and only if $G$ is a regular graph.
	Clearly, $\mid V(\overline{G})\mid=n$ and $\mid E(\overline{G})\mid=n(n-1)/2-m$.
	Using (\ref{eq4}), from (\ref{eq1}) and (\ref{eq2}), we obtain
	\begin{eqnarray}
	|V(\overline{G})|M_2(\overline{G})-|E(\overline{G})|M_1(\overline{G})
	&=&nM_2(\overline{G})-(n(n-1)/2-m)M_1(\overline{G})\nonumber\\
	&=&(n-2)\left(\frac{n}{2}M_1(G)-2m^2\right)-nM_2(G)+mM_1(G)\nonumber\\
	&\geq&mM_1(G)-nM_2(G)\label{eq3}
	\end{eqnarray}
	with equality if and only if $G$ is  regular.\\
	\rm{(i)} If $G\in A$, then $mM_1(G)-nM_2(G)>0$. From (\ref{eq3}), we have $|V(\overline{G})|M_2(\overline{G})-|E(\overline{G})|M_1(\overline{G})>0$, that is,
    $\overline{G}\in C$.\\
	\rm{(ii)} Similarly, if $G$ is irregular and $G\in B$, then $\overline{G}\in C$ from the definition of $B$ and (\ref{eq3}).
\end{proof}

\begin{lemma} \label{1r2} Let $G$ be a regular graph of order $n> 3$. Then\\
\noindent
	\rm{(i)} $G-e\in C$, where $e=v_iv_j\in E(G)$,
	
\noindent
	\rm{(ii)} $G+e\in C$, where $e=v_iv_j\notin E(G)$.
\end{lemma}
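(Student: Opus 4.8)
Let $G$ be $r$-regular on $n$ vertices, so $m=nr/2$, $M_1(G)=nr^2$, $M_2(G)=mr^2=nr^3/2$. Note $G$ lies in $B$ since $M_1(G)/n=r^2=M_2(G)/m$. For each case I want to show that after the single edge modification $G'$ (either $G-e$ or $G+e$), we have $|V(G')|\,M_2(G') < |E(G')|\,M_1(G')$, i.e. $n\,M_2(G') < |E(G')|\,M_1(G')$, which is exactly the condition $G'\in C$. The plan is to compute the exact changes $\Delta M_1$, $\Delta M_2$, and $\Delta m$ caused by the edge operation, then plug into $n M_2(G') - m' M_1(G')$ and show it is negative for $n>3$.

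**Case (i): deleting $e=v_iv_j$.** Here $m'=m-1$. Deleting $e$ drops $d_i,d_j$ from $r$ to $r-1$, so $\Delta M_1 = 2[(r-1)^2-r^2] = -2(2r-1)$, giving $M_1(G-e)=nr^2-2(2r-1)$. For $M_2$: we lose the term $d_id_j=r^2$; for each neighbor $w$ of $v_i$ other than $v_j$ the term $r\cdot d_w$ changes by $-d_w$ (and similarly for $v_j$). Since $G$ is regular all those neighbors still have degree $r$ in $G$, but I must be slightly careful if $v_i,v_j$ have common neighbors — a common neighbor $w$ contributes a $-d_w = -r$ from the $v_i$ side and $-r$ from the $v_j$ side, which is fine. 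So $\Delta M_2 = -r^2 - (r-1)\cdot r - (r-1)\cdot r = -r^2 - 2r(r-1) = -(3r^2-2r)$ — wait, I need to recount: $v_i$ has $r-1$ remaining neighbors each losing $r$ from the product, similarly $v_j$, plus the lost $r^2$. So $\Delta M_2 = -r^2 - 2(r-1)r$. Then $n M_2(G-e) - (m-1)M_1(G-e)$ expands; using $nM_2(G)=mM_1(G)$ (the $B$-identity) the leading terms cancel and I expect to be left with $M_1(G) + n\,\Delta M_2 - (m-1)\Delta M_1 - $ (something), which should simplify to a polynomial in $n,r$ that is negative precisely when $n>3$. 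The key algebraic fact I anticipate needing is $2m = nr$ to eliminate $m$ and reduce everything to $n$ and $r$, after which the inequality should reduce to something like $-(n-r)(\text{positive})<0$ or a clean $n>3$ threshold.

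**Case (ii): adding $e=v_iv_j$.** This is the mirror image: $m'=m+1$, $d_i,d_j$ go from $r$ to $r+1$, so $\Delta M_1 = 2[(r+1)^2-r^2]=2(2r+1)$ and $\Delta M_2 = (r+1)^2 + 2r\cdot r$ from the new edge plus the $r$ neighbors of each endpoint each gaining $r$ — actually the new edge contributes $(r+1)^2$, and each of the $r$ old neighbors of $v_i$ sees its product with $v_i$ rise by $d_w=r$, similarly for $v_j$, so $\Delta M_2 = (r+1)^2 + 2r^2$. Again plug into $n M_2(G+e)-(m+1)M_1(G+e)$, use $nM_2(G)=mM_1(G)$ and $2m=nr$, and verify negativity for $n>3$.

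**Main obstacle.** The computations themselves are routine; the real care is in (a) correctly accounting for common neighbors of $v_i$ and $v_j$ when computing $\Delta M_2$ — I claim the naive count is actually correct because a common neighbor simply contributes its degree-change term twice, once per endpoint, and this double-counting is exactly right — and (b) making sure the final polynomial inequality in $n$ and $r$ genuinely holds for all valid regular graphs with $n>3$, including checking the boundary behavior (e.g. $r=n-1$, the complete graph, for case (i), and $r=0$ or $r=1$ for case (ii) where "adding an edge" may force specific small values). I would organize the proof as: state $M_1(G),M_2(G),m$ in terms of $n,r$; compute the three deltas; form $D := |V(G')|M_2(G') - |E(G')|M_1(G')$; substitute and simplify to a factored polynomial in $n,r$; and conclude $D<0$ for $n>3$ by inspecting the factors. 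I expect the simplification in each case to collapse to an expression whose sign is controlled by a single factor like $(3-n)$ times a manifestly positive quantity.
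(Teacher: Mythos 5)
Your computational plan is exactly the paper's: express $M_1(G)$, $M_2(G)$, $m$ in terms of $n$ and $r$, compute the exact changes caused by deleting or adding the edge, and examine the sign of $n\,M_2(G')-|E(G')|\,M_1(G')$. Your increments are all correct: $\Delta M_1=-2(2r-1)$ and $\Delta M_2=-r^2-2r(r-1)$ for deletion, $\Delta M_1=2(2r+1)$ and $\Delta M_2=(r+1)^2+2r^2$ for addition, and your handling of common neighbours is right (each incident edge is counted once, so no double-counting issue arises). Using the identity $nM_2(G)=mM_1(G)$ for regular $G$, your quantity $D:=nM_2(G')-|E(G')|M_1(G')$ collapses to $(n-4)r+2$ in case (i) and to $(n-4)(r+1)+2$ in case (ii), exactly as in the paper.

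However, there is a genuine error in your setup: you have the defining inequality of $C$ reversed. Since $C=\{G:\,M_1(G)/n<M_2(G)/m\}$, membership $G'\in C$ is equivalent to $m'M_1(G')<nM_2(G')$, i.e.\ to $D>0$, not to $D<0$ as you state ($D<0$ is the condition for $A$). This matters because the computation you outline genuinely produces $D=(n-4)r+2>0$ (resp.\ $(n-4)(r+1)+2>0$) for $n>3$, so if you carry out your plan faithfully you will arrive at a positive quantity while your stated goal is to prove it negative; your anticipated factor ``$(3-n)$ times a positive quantity'' does not materialize. The mathematics underneath is sound and the conclusion $G\pm e\in C$ does follow, but as written the proposal would end in a contradiction with its own target inequality. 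Flip the direction of the membership condition at the outset and the argument goes through.
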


\begin{proof} Let $r$ be the degree of the regular graph $G$. Then  $\mid E(G)\mid=nr/2$.\\
\noindent
	\rm{(i)} By the definition of the Zagreb indices, we have
	\begin{eqnarray*}
	&&M_1(G-e)=(n-2)r^2+2(r-1)^2=nr^2-4r+2\nonumber\\
\mbox{and}&&\\
	&&M_2(G-e)=2(r-1)r(r-1)+\left(\frac{nr}{2}-2r+1\right)r^2=\frac{nr^3}{2}-3r^2+2r.\nonumber
	\end{eqnarray*}
	Then from the above, we get
	\begin{eqnarray}
	nM_2(G-e)-(nr/2-1)M_1(G-e)=(n-4)r+2>0\nonumber
	\end{eqnarray}
	as $n>3$. Therefore $G-e\in C$ because $\mid E(G-e)\mid=nr/2-1$.
	
\noindent
	\rm{(ii)} For $e=v_iv_j\notin E(G)$, by the definition of the Zagreb indices, we have
	\begin{eqnarray*}
	&&M_1(G+e)=(n-2)r^2+2(r+1)^2=nr^2+4r+2\\
\mbox{ and }&&\\
	&&M_2(G+e)=2r(r+1)r+\left(\frac{nr}{2}-2r\right)r^2+(r+1)^2=\frac{nr^3}{2}+3r^2+2r+1.
	\end{eqnarray*}
	Then from the above, we get
	\begin{eqnarray}
	nM_2(G+e)-(nr/2+1)M_1(G+e)=(n-4)(r+1)+2>0\nonumber
	\end{eqnarray}
	as $n> 3$. Therefore $G+e\in C$ because $\mid E(G+e)\mid=nr/2+1$.
\end{proof}

\noindent
We now give our main result as follows:
\begin{theorem} \label{1k1} Let ${\mathcal G}_n$ be the set of class of graphs of order $n>3$. Let the three sets $A,\,B,\,C\subseteq {\mathcal G}_n$ be defined before. Then $|A|+|B|<|C|$.
\end{theorem}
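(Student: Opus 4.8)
The plan is to recast $|A|+|B|<|C|$ as a counting inequality and then populate $C$ using the two lemmas. First observe that every regular graph lies in $B$: for an $r$-regular graph $G$ one has $M_1(G)/n=r^2=M_2(G)/m$. So write $B=B_{\mathrm{reg}}\sqcup B_{\mathrm{irr}}$ for its regular and irregular members. Complementation is an involution of ${\mathcal G}_n$, and Lemma~\ref{1r1} says it carries $A$ into $C$ and $B_{\mathrm{irr}}$ into $C$; since $A$ and $B_{\mathrm{irr}}$ are disjoint, their images $\overline A$ and $\overline{B_{\mathrm{irr}}}$ are disjoint subsets of $C$. Setting $C'=C\setminus(\overline A\cup\overline{B_{\mathrm{irr}}})$ we get $|C|=|A|+|B_{\mathrm{irr}}|+|C'|$, so the theorem becomes equivalent to showing $|C'|\ge |B_{\mathrm{reg}}|+1$.

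To this end I would manufacture graphs in $C'$ out of the regular graphs via Lemma~\ref{1r2}. For each $r$-regular graph $G$ with $0\le r\le n-2$ fix a non-edge $e_G$ and set $\iota(G)=G+e_G$; then $\iota(G)\in C$ by Lemma~\ref{1r2}(ii). The crucial point is that $\overline{\iota(G)}=\overline G-e_G$, and $\overline G$ is $(n-1-r)$-regular with $n-1-r\ge 1$, so Lemma~\ref{1r2}(i) gives $\overline{\iota(G)}\in C$ as well; hence $\overline{\iota(G)}\notin A\cup B$, which forces $\iota(G)\notin\overline A\cup\overline{B_{\mathrm{irr}}}$, i.e. $\iota(G)\in C'$. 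The map $\iota$ is injective: if $\iota(G_1)=\iota(G_2)$, comparing the degree sums (which equal $nr+2$) shows $G_1,G_2$ have the same regularity $r$; and from $G_1+e_{G_1}=G_2+e_{G_2}$ one gets $e_{G_1}\in E(G_2)$, $e_{G_2}\in E(G_1)$ and $G_2=(G_1-e_{G_2})+e_{G_1}$, so $e_{G_1}$ must join the two degree-$(r-1)$ vertices of $G_1-e_{G_2}$, forcing $e_{G_1}=e_{G_2}$ and $G_1=G_2$. Thus $\iota$ embeds the $|B_{\mathrm{reg}}|-1$ regular graphs of degree at most $n-2$ into $C'$. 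Finally, each of the $\binom n2$ single-edge graphs $H$ lies in $C$ (since $nM_2(H)-|E(H)|M_1(H)=n-2>0$) and has complement $K_n-e\in C$ by Lemma~\ref{1r2}(i), hence $H\in C'$; and $H=\iota(G)$ is possible only for $G=\overline{K_n}$, so at least $\binom n2-1\ge 5$ single-edge graphs lie in $C'$ outside the image of $\iota$. Adding these up, $|C'|\ge(|B_{\mathrm{reg}}|-1)+(\binom n2-1)\ge |B_{\mathrm{reg}}|+1$, and the theorem follows.

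I expect the main obstacle to be the step $\iota(G)\in C'$, rather than merely $\iota(G)\in C$: it rests on the observation that the family of ``regular $\pm$ one edge'' graphs is stable under complementation, so that Lemma~\ref{1r2} applies simultaneously to such a graph and to its complement — this is precisely what excludes any accidental overlap with $\overline A\cup\overline{B_{\mathrm{irr}}}$. The other ingredients (injectivity of $\iota$, and the fact that the extra single-edge graphs are genuinely new) are routine degree-sequence bookkeeping. One should also fix a convention placing the edgeless graph $\overline{K_n}$ in $B$ (or simply note that excluding it only enlarges the stock of single-edge graphs available as extras), so that the splitting $B=B_{\mathrm{reg}}\sqcup B_{\mathrm{irr}}$ is exact.
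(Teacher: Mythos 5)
Your overall architecture is genuinely different from the paper's: the paper builds an explicit injection from $A\cup B$ into $C$ (sending irregular $G$ to $\overline{G}$ and regular $G$ to $G\pm e$) and then exhibits $K_n-e$ as an element of $C$ outside the image, whereas you peel $\overline{A}\cup\overline{B_{\mathrm{irr}}}$ off $C$ and reduce everything to $|C'|\ge|B_{\mathrm{reg}}|+1$. Your key observation --- that $\overline{G+e_G}=\overline{G}-e_G$ is again a regular graph minus an edge, so Lemma~\ref{1r2} certifies membership in $C$ for $\iota(G)$ \emph{and} its complement simultaneously, which is exactly what forces $\iota(G)\in C'$ --- is correct and is arguably cleaner than the paper's case analysis. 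The injectivity of $\iota$ also survives the passage to isomorphism classes (the added edge is recoverable as the unique edge joining the two vertices of degree $r+1$), even though you argue it for labelled graphs.

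The gap is in the final count. The paper counts graphs up to isomorphism --- its own proof of this theorem argues $H_1\ncong H_2$, not $H_1\ne H_2$ --- and up to isomorphism there is exactly \emph{one} single-edge graph $K_2\cup(n-2)K_1$, not $\binom{n}{2}$ of them. That one graph is precisely $\iota(\overline{K_n})$, so it contributes nothing outside the image of $\iota$, and your estimate degrades to $|C'|\ge|B_{\mathrm{reg}}|-1$, i.e.\ $|C|\ge|A|+|B|-1$, which does not prove the theorem: you are short by two isomorphism classes. Repairing this requires exhibiting two members of $C'$ that are not of the form $G+e_G$ for regular $G$. For instance $K_n-e$ works when $n\ge5$, since its degree sequence $(n-1)^{n-2}(n-2)^{2}$ is not of the form $r^{n-2}(r+1)^{2}$ --- but note that $K_4-e\cong C_4+e$, so $n=4$ needs separate treatment --- and you would still need a second such graph. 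As written, the argument does not close in the setting the paper actually works in.
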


\begin{proof} First we assume that $G$ is an irregular graph. If $G\in A\cup B$, then by Lemma \ref{1r1}, $\overline{G}\in C$. Next we assume that $G$ is a regular graph. Then by Lemma \ref{1r2}, we obtain $G-e\in C$ $(e\in E(G))$ and $G+e\in C$ $(e\notin E(G))$. Thus we conclude that if any graph $G$ in $A\cup B$ then there exists a graph $H$ $(\cong\overline{G}\mbox{ or }G-e\mbox{ or }G+e)$ in $C$, that is, $G\in A\cup B$ implies that $H\in C$.

\vspace*{3mm}

Let $G_1$ and $G_2$ $(G_1\ncong G_2)$ be any two graphs in $A\cup B$. Again let $H_1$ and $H_2$ be the graphs in $C$ such that $G_1$ corresponds to $H_1$ and $G_2$  corresponds to $H_2$. We have to prove that $H_1$ and $H_2$ are not isomorphic. When $G_1$ and $G_2$ are both irregular, then by Lemma \ref{1r1}, we obtain
             $$H_1\cong \overline{G_1}\ncong \overline{G_2}\cong H_2.$$
When $G_1$ and $G_2$ are both regular, then by Lemma \ref{1r2}, $H_1$ and $H_2$ are not isomorphic. Otherwise, one of them ($G_1$ or $G_2$) is regular and the other one is irregular. Without loss of generality, we can assume that $G_1$ is regular and $G_2$ is irregular. Then $H_1\cong G_1-e$ for some $e\in E(G_1)$ and $H_2\cong \overline{G_2}$. On the contrary, suppose that $H_1$ and $H_2$ are  isomorphic.
	Then $\overline{G_2}\cong G_1-e$ and it follows that $$G_2\cong \overline{G_1-e}\cong \overline{G_1}+e.$$
	Therefore by Lemma \ref{1r2} (ii), we have $G_2\in C$ since $\overline{G_1}$ is regular. This contradicts the fact that $G_2\in A\cup B$. Therefore $H_1$ and $H_2$ are not isomorphic. Hence we conclude that $|A|+|B|\leq |C|$.

\vspace*{3mm}

We now prove that the inequality is strict. For this let $H\cong K_n-e$ $(e\mbox{ is an edge in }K_n)$, $n\geq 3$. Then $\overline{H}\cong K_2\cup (n-2)\,K_1$. Thus we have
$$M_1(H)=(n-2)(n-1)^2+2(n-2)^2,~~M_2(H)=\frac{n(n-1)^3}{2}-(n-1)(3n-5),$$
and 
$$M_1(\overline{H})=2,~~M_2(\overline{H})=1.$$
One can easily check that
$$\frac{M_1(H)}{n}<\frac{M_2(H)}{m}~~\mbox{ and }~~\frac{M_1(\overline{H})}{n}<\frac{M_2(\overline{H})}{\frac{n(n-1)}{2}-m}.$$
Hence $H,\,\overline{H}\in C$. If there is no graph in $A\cup B$ correspondence to $H$ in $C$, then we have $|A|+|B|<|C|$. Otherwise, there is a graph $G$ in $A\cup B$ corresponds to $H$ in $C$. Then by Lemma \ref{1r1}, we have $\overline{G}\cong H$, that is, $G\cong \overline{H}\in C$, a contradiction as $G\in A\cup B$.
This completes the proof.
\end{proof}

\begin{corollary} \label{1k2} Let ${\mathcal G}_n$ be the set of class of graphs of order $n>3$. Also let the three sets $A,\,B,\,C\subseteq {\mathcal G}_n$ be defined before. Then $|A|<|C|$ and $|B|<|C|$.
\end{corollary}

\begin{corollary} Let ${\mathcal G}_n$ be the set of class of graphs of order $n>3$. Also let $C$ be the set defined before. Then $|C|>\frac{|{\mathcal G}_n|}{2}$.
\end{corollary}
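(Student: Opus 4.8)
The plan is to derive this statement directly from Theorem~\ref{1k1} together with the partition identity already recorded in the introduction. Recall that since $A$, $B$ and $C$ are pairwise disjoint and their union is all of $\mathcal{G}_n$, we have the bookkeeping identity $|A|+|B|+|C|=|\mathcal{G}_n|$. First I would invoke Theorem~\ref{1k1}, which applies since $n>3$, to obtain the strict inequality $|A|+|B|<|C|$. Adding $|C|$ to both sides gives $|A|+|B|+|C|<2|C|$, and substituting the partition identity on the left-hand side yields $|\mathcal{G}_n|<2|C|$, that is, $|C|>\tfrac{|\mathcal{G}_n|}{2}$, which is exactly the assertion.

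There is essentially no obstacle here: the corollary is an immediate arithmetic consequence of Theorem~\ref{1k1}, and the only ingredient beyond that theorem is the already-established fact that $A$, $B$, $C$ partition $\mathcal{G}_n$. One may rephrase the conclusion informally as saying that, for every $n>3$, strictly more than half of all graphs of order $n$ satisfy $M_1(G)/n<M_2(G)/m$.
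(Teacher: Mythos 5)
Your argument is correct and is exactly the paper's proof: combine the partition identity $|A|+|B|+|C|=|\mathcal{G}_n|$ with Theorem~\ref{1k1} to get $2|C|>|\mathcal{G}_n|$. Nothing further is needed.
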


\begin{proof} From the definitions of $A,\,B$ and $C$, we have $|A|+|B|+|C|= |{\mathcal G}_n|$. By Theorem \ref{1k1} with the above result, we obtain
  $$2|C|>|{\mathcal G}_n|,~\mbox{ that is, }|C|>\frac{|{\mathcal G}_n|}{2}.$$
\end{proof}

\noindent
Now we would like to end this paper with the following relevant conjecture.
\begin{conjecture} \label{1pk1} Let $A$ and $B$ be the two sets defined before. Then $|A|<|B|$.
\end{conjecture}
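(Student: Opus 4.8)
Since the conjecture is an exact analogue of Theorem~\ref{1k1}, the natural plan is to mimic its proof: produce a map sending each $G\in A$ to a graph in $B$, show the map is injective, and show it misses at least one member of $B$; the strict inequality $|A|<|B|$ then follows. The first thing to record is that $B$ is already quite rich. Every $r$-regular graph with $r\ge 1$ has $M_1/n=r^2=M_2/m$, so it lies in $B$, and a short computation shows every complete bipartite graph $K_{p,q}$ lies in $B$ as well, whereas $A$ consists only of genuinely irregular graphs satisfying the \emph{strict} inequality $mM_1(G)>nM_2(G)$ --- a condition that, in view of the near-truth of the Caporossi--Hansen conjecture $A=\emptyset$ (true for large classes of graphs, e.g.\ chemical graphs), is expected to be rare. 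Thus it would suffice to construct a non-surjective injection $\varphi\colon A\hookrightarrow\mathcal R_n$, where $\mathcal R_n$ denotes the set of regular graphs of order $n$ with positive degree, so that $\mathcal R_n\subseteq B$ and $\mathcal R_n\ne\emptyset$ for $n>3$.

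The concrete attempt I would make is a canonical ``regularization'': fix the labelling $v_1,\dots,v_n$, and from $G\in A$ repeatedly modify the current graph --- adding the lexicographically first admissible non-edge at a minimum-degree vertex, or deleting the lexicographically first admissible edge between two maximum-degree vertices, according to a fixed priority rule --- until a regular graph $\varphi(G)=R(G)$ is reached. One then has to verify: (a) the process always terminates at a regular graph; (b) the map $G\mapsto R(G)$ is injective on $A$; and (c) some fixed regular graph (for instance $K_n$ or $C_n$) is never in the image. Of these, (b) is the real crux: a priori many graphs regularize to the same target, so to recover $G$ from $R(G)$ one must either constrain the rule so that the edits performed are forced and reversible given $R(G)$ plus a bounded amount of bookkeeping, and then absorb that bookkeeping by enlarging the target family inside $B$ (regular graphs, complete bipartite graphs, and the small perturbations of them that still satisfy the equality defining $B$), or else abandon regularization in favour of a cleverer encoding. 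This difficulty is exactly the rigidity of $B$: unlike $C$, which is a ``thick'' set that essentially any perturbation of an irregular graph falls into --- this is what makes Lemmas~\ref{1r1} and~\ref{1r2} work --- the set $B$ is cut out by an \emph{equality}, so one has no slack and an injection into it must be engineered, not stumbled upon.

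Because of that rigidity, the realistic route is probably two-pronged. For small $n$ one verifies $|A|<|B|$ directly by computer, since $A$ and $B$ are easy to enumerate. For large $n$ one would combine a lower bound $|B|\ge|\mathcal R_n|$, using the known asymptotics for the number of labelled regular graphs, with an upper bound on $|A|$ obtained from structural restrictions on its members: a graph in $A$ is far from regular (large degree spread, large maximum degree, and the like), which severely limits its degree sequence, and one then counts the graphs realizing such sequences. The main obstacle --- and, I expect, the true source of the difficulty of the conjecture --- is to make the structural description of $A$ sharp enough that this count provably beats $|\mathcal R_n|$; by comparison the supporting algebra (rewriting $mM_1-nM_2$ under edge additions and deletions, exactly as in the proofs above) and the finite verification are routine.
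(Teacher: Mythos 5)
The statement you are asked about is presented in the paper as an open \emph{conjecture}: the authors give no proof of $|A|<|B|$, so there is no argument of theirs to compare yours against. Your write-up is, by its own admission, a research programme rather than a proof, and it should be assessed as such. The preliminary observations are correct and worth keeping: every $r$-regular graph with $r\ge 1$ satisfies $M_1/n=r^2=M_2/m$ and hence lies in $B$; every $K_{p,q}$ satisfies $M_1/n=pq=M_2/m$ and lies in $B$; and $A$ contains only irregular graphs. These show $B$ is nonempty and reasonably rich, but they prove nothing about the comparison with $|A|$.

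The genuine gaps are exactly where you flag them, and they are fatal to calling this a proof. First, the ``canonical regularization'' map is not shown to terminate (your local moves have no fixed target degree, and an $r$-regular graph on $n$ vertices requires $nr$ even, so the process can fail to reach regularity at all), and even granting termination it is manifestly many-to-one, so injectivity --- the only step that would actually yield $|A|\le|B|$ --- is not established and no mechanism for the ``bounded bookkeeping'' is given. Second, the counting route requires an upper bound on $|A|$ sharp enough to beat the number of regular graphs of order $n$; no such structural restriction on members of $A$ is known or derived here, and asserting that $A$ is ``expected to be rare'' because the Caporossi--Hansen conjecture is ``nearly true'' is heuristic, not proof (that conjecture is false in general, which is precisely why $A$ can be nonempty). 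By contrast, the paper's Theorem~\ref{1k1} succeeds because $C$ is defined by a strict inequality with slack, so complementation and single-edge perturbations land in it robustly; as you correctly observe, $B$ is cut out by an equality and admits no analogous perturbation argument. The conjecture therefore remains open, and your text should be read as a (reasonable) discussion of obstacles, not as a solution.
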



\end{document}